\theoremstyle{plain}
\newtheorem{thm}{Theorem}[section]
\newtheorem{lem}[thm]{Lemma}
\newtheorem{prop}[thm]{Proposition}
\theoremstyle{definition}
\numberwithin{equation}{section}
\newcommand{\inter}[1]{\textup{int}({#1})}
\newcommand{\diam}[1]{\textup{diam}({#1})}
\newcommand{\inrad}[1]{\textup{inrad}({#1})}
\newcommand{\abs}[1]{\left\vert#1\right\vert}
\newcommand{\bb}[1]{\mathbb{#1}}
\begin{document}
\title{On a degeneracy ratio for bounded convex polytopes}

\author{Nada Cvetkovi\'{c}\thanks{\ Corresponding author: ncvetkovic@math.fu-berlin.de}}
\affil{Department of Mathematics and Computer Science, Freie Universit\"at Berlin, Arnimallee 6, 14195 Berlin, Germany}

\author{Han Cheng Lie\thanks{\ hanlie@uni-potsdam.de}}
\affil{Universit\"{a}t Potsdam, Institut f\"{u}r Mathematik, Karl-Liebknecht Str. 24-26, D-14476 Potsdam OT Golm, Germany}
\renewcommand\Affilfont{\small}

\date{}

\maketitle

\begin{abstract}
We consider a quantity that measures the roundness of a bounded, convex $d$-polytope in $\bb{R}^d$. We majorise this quantity in terms of the smallest singular value of the matrix of outer unit normals to the facets of the polytope.
\end{abstract}



\section{Introduction}

A well-known problem in convex optimisation involves finding the radius of the largest inscribed ball of a bounded convex $d$-polytope $P\subset\mathbb{R}^d$. For simplicity, we shall refer to bounded convex $d$-polytopes as `polytopes'. We shall refer to the above-mentioned radius as the `inner radius' and denote it by $\inrad{P}$. Then 
\begin{equation}
\label{eq:inner_radius_of_cell}
\inrad{P}:=\sup\{ r>0\ :\ B(x_c,r)\subset P\text{ for some }x_c\in P\},
\end{equation}
where $B(z,r)$ denotes the open ball with radius $r$ and centre $z$ with respect to the Euclidean norm $\abs{\cdot}_2$. In \eqref{eq:inner_radius_of_cell}, the centre $x_c$ of the corresponding ball is sometimes referred to as the `Chebyshev centre' \cite[Section 4.3.1]{boyd2004convex}. Since the inner radius is invariant under translations of $P$, we may assume without loss of generality that the Chebyshev centre of our polytope is the origin. Let $A\in\mathbb{R}^{m\times d}$ be the matrix whose rows $\{a_i^\top\}_{i=1}^{m}$, $a_i\in\mathbb{R}^d$ are the outer unit normals to the $m$ facets of $P$, and let $b\in\mathbb{R}^{m}$ be the vector of offsets from the origin. Then we may represent $P$ as the set $\{x\in\mathbb{R}^d\ :\ Ax\preccurlyeq b\}$ of solutions to a system of linear inequalities, where $Ax\preccurlyeq b$ means $a_i^\top x\leq b_i$ for $i=1,2,\ldots, m$. Given these assumptions, the inner radius is then defined as the value of the optimisation problem
    \begin{equation}
\label{eq:lin_prog_1}
    \begin{array}{lc}
    \text{maximise} & r \\
    \text {subject to} & Au\preccurlyeq b\\
     & \abs{u}_2 \leq r.
    \end{array}
\end{equation}
Given the Euclidean diameter 
\begin{equation}
\label{eq:diam}
 \diam{P}:=\sup\{\abs{x-y}_2\ :\ x,y\in P\}
\end{equation}
 of a $d$-polytope $P$, we define the \emph{degeneracy ratio} of the polytope $P$ as 
 \begin{equation}
\label{eq:degeneracy_parameter}
\delta(P):=\frac{\inrad{P}}{\diam{P}}.
 \end{equation}
The degeneracy ratio may be defined for any bounded subset $U$ of $\mathbb{R}^d$. Observe that $0\leq\delta(U)\leq 1/2$, where $\delta(U)=0$ if and only if $P$ is of strictly positive codimension and $\delta(U)=1/2$ if and only if $ \inrad{U}=\tfrac{1}{2}\diam{U}$, i.e. if and only if $U$ is a Euclidean ball. Thus the ratio in \eqref{eq:degeneracy_parameter} quantifies how close $P$ is to being `round': if $\delta(P)\approx 0$, then $P$ may be `thin' or `flat'. 

The motivation for defining this quantity is the following. Suppose we are given a $d$-polytope $P$ with $m$ facets $\{F_i\}_{i=1}^{m}$, a vector field $J:P\to\mathbb{R}^d$, and a collection $\{\phi_i\}_{i=1}^{m}$ of scalars given by the surface integrals
\begin{equation*}
\phi_i:=\int_{F_i} J(y)\cdot a_i\mathrm{d}\sigma(y),
\end{equation*}
where $\sigma$ is the $(d-1)$-dimensional surface measure induced by Lebesgue measure, and $F_i$ is the facet for which $a_i\in\mathbb{R}^d$ is the outward unit normal. The problem is to determine whether there exists a vector $\tilde{J}\in\mathbb{R}^d$ such that $\tilde{J}\cdot a_i \sigma(F_i)=\phi_i$, for $i=1,\ldots, m$. That is, we wish to solve 
\begin{quote}
\textbf{Problem}: Given a polytope $P\subset\mathbb{R}^d$ with $m$ facets $\{F_i\}_{i=1}^{m}$ defined by the system $Ax\preccurlyeq b$ of linear inequalities, a vector field $J:P\to\mathbb{R}^d$, and a vector $\phi$ with components given by the surface integrals $\{\phi_i\}_{i=1}^{m}$, determine conditions on $P$ such that there exists a unique solution $J\in\mathbb{R}^d$ to $AJ=\phi$. 
\end{quote}
Given that any $d$-polytope must have at least $d+1$ facets, we observe that $m\geq d+1$, and hence it is not in general true that $\phi$ lies in the column space of $A$. However, by considering the normal equations $A^\top AJ=A^\top \phi$, it follows that there exists a unique solution to $AJ=\phi$ if $A^\top A$ is invertible. Since $A^\top A$ is invertible if and only if its eigenvalues are strictly positive, and since the eigenvalues of $A^\top A$ are the squares of the singular values of $A$, it follows that $A^\top A$ is invertible if and only if the smallest singular value $\sigma_{\min}(A)$ of $A$ is strictly positive. 

The main result of this paper is the following.
\begin{prop}
\label{prop:relates_sigma_and_degeneracy_ratio_vor_cell}
Let $P:=\{x\in\mathbb{R}^d\ :\ Ax \preccurlyeq b\}$ be a bounded convex $d$-polytope. Then $\delta (P) < \sigma_{\min}(A)$.
\end{prop}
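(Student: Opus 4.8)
The plan is to exhibit a long chord of $P$ along a right singular vector of $A$ corresponding to $\sigma_{\min}(A)$, and to compare its length against $\diam P$.

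As in the introduction, I would first translate $P$ so that its Chebyshev centre is the origin and set $r:=\inrad P$; since $P$ is a $d$-polytope it is full-dimensional, so $r>0$, and since $P$ is bounded and nonempty $A$ has trivial kernel (a nonzero kernel vector would produce a line inside $P$), so $\sigma_{\min}(A)>0$. Because $B(0,r)\subseteq P$ and $\abs{a_i}_2=1$, every offset obeys $b_i\ge\sup_{\abs{x}_2\le r}a_i^\top x=r$. Now choose a unit vector $v$ with $\abs{Av}_2=\sigma_{\min}(A)$; the elementary but crucial observation is that $\sum_{i=1}^m(a_i^\top v)^2=\abs{Av}_2^2=\sigma_{\min}(A)^2$, so $\abs{a_i^\top v}\le\sigma_{\min}(A)$ for every $i$.

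Next I would bound how far one may move from the origin along $\pm v$ while staying in $P$. Boundedness of $P$ makes its recession cone trivial, so there is an index $i$ with $a_i^\top v>0$ and an index $j$ with $a_j^\top v<0$; hence $t^\ast:=\max\{t\ge 0:tv\in P\}=\min_{i:\,a_i^\top v>0}b_i/(a_i^\top v)$ is finite, and combining $b_i\ge r$ with $a_i^\top v\le\sigma_{\min}(A)$ yields $t^\ast\ge r/\sigma_{\min}(A)$. Applying the same reasoning to $-v$ gives $s^\ast:=\max\{s\ge 0:-sv\in P\}\ge r/\sigma_{\min}(A)$. Since $P$ is convex, the segment with endpoints $t^\ast v$ and $-s^\ast v$ lies in $P$, so $\diam P\ge t^\ast+s^\ast\ge 2r/\sigma_{\min}(A)$, and therefore $\delta(P)=r/\diam P\le\tfrac12\sigma_{\min}(A)<\sigma_{\min}(A)$, which is even slightly stronger than the assertion.

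I do not anticipate a genuine difficulty. The substance of the argument is in two small insights: that the direction in which $P$ is forced to be long is exactly the smallest right singular direction of $A$, where the facet normals have small components, and that the inclusion $B(0,r)\subseteq P$ translates into the bounds $b_i\ge r$ that control the chord length. The only point requiring care is that the index set $\{i:a_i^\top v>0\}$ is nonempty, which is precisely where boundedness of $P$ is used; everything else is routine.
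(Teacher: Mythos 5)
Your argument is correct, and it reaches the conclusion by a genuinely different route from the paper's. Both proofs look at the chord of $P$ through the (translated) centre in the direction of a right singular vector $v$ attaining $\sigma_{\min}(A)$, but the mechanisms diverge from there. The paper pushes the chord $[f\ g]$ forward by $A$ into $\mathbb{R}^m$, locates the images on the facet hyperplanes $\{x_i=b_i\}$ and $\{x_j=b_j\}$ for distinct $i,j$, and uses a right-triangle/hypotenuse comparison in $\mathbb{R}^m$ to get $\sigma_{\min}(A)\,\abs{f-g}_2=\abs{Af-Ag}_2>b_i\geq\inrad{P}$. You instead stay in $\mathbb{R}^d$ and use the componentwise consequence $\abs{a_i^\top v}\leq\abs{Av}_2=\sigma_{\min}(A)$ of the singular-vector choice, together with $b_i\geq\inrad{P}$ (which is Lemma \ref{lem:equivalent_lin_prog} of the paper), to bound each half-chord length $b_i/(a_i^\top v)\geq\inrad{P}/\sigma_{\min}(A)$ directly; your care about the nonemptiness of $\{i:a_i^\top v>0\}$ and about $\sigma_{\min}(A)>0$ via triviality of the kernel is exactly the right use of boundedness. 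Your version is more elementary (it needs only the variational characterisation of $\sigma_{\min}$, not the full SVD or the excursion into $\mathbb{R}^m$), and because you exploit \emph{both} ends of the chord rather than the distance from one endpoint's image to a single hyperplane, you obtain the quantitatively stronger bound $\delta(P)\leq\tfrac{1}{2}\sigma_{\min}(A)$, which is sharp in the sense that $\delta$ can never exceed $\tfrac12$ anyway; the paper's argument, as written, only yields $\delta(P)<\sigma_{\min}(A)$.
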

Proposition \ref{prop:relates_sigma_and_degeneracy_ratio_vor_cell} characterises a geometric property of a polytope $P$ in terms of an algebraic property of the matrix $A$ that partly determines $P$, by majorising the degeneracy ratio of $P$ in terms of the smallest singular value of $A$. The result can also be used to bound the smallest singular value of the matrix $A$ away from zero, so we may use Proposition \ref{prop:relates_sigma_and_degeneracy_ratio_vor_cell} to solve the aforementioned problem: if the degeneracy ratio $\delta(P)$ of $P$ is strictly positive, then there exists a unique solution $J$ to $AJ=\phi$. Note that Proposition \ref{prop:relates_sigma_and_degeneracy_ratio_vor_cell} cannot be used to bound the smallest singular value of an \emph{arbitrary} rectangular matrix $A$. This is because the vector $b$ of offsets of the polytope is also important, and enters implicitly via the argument $P$ of the degeneracy ratio $\delta(P)$. 

\subsection{Related work}
\label{ssec:related_work}

In the numerical analysis of finite element methods for solving partial differential equations (PDEs) defined over bounded domains, the geometry of the sets with which the bounded domain is discretised plays an important role. One of the main ideas is to impose conditions so that the discretisation sets stay sufficiently regular as the discretisation is refined, in order to ensure convergence with a suitable rate of the finite element approximation of the solution of the PDE. In particular, the inverse of the degeneracy ratio appears in P. Ciarlet's book \cite{ciarlet2002} via the so-called `regularity condition' or `inscribed ball condition': a triangulation of a domain $\Omega\subset\mathbb{R}^2$ is said to yield `regular' finite elements if there exists a constant $\sigma>0$ such that $\diam{K}/\inrad{K}\leq \sigma$ and $\diam{K}\leq 1$ for every triangle $K$ in the triangulation. Apart from the boundedness condition on the diameter, the regularity condition is equivalent to requiring that the degeneracy ratio of each triangle is bounded away from zero. Under this assumption, one can establish an error bound in the finite element approximation of a function \cite[Section 3.1]{ciarlet2002}. A similar condition was introduced in \cite{lin1985FEaccuracy}, while a survey of conditions of this kind is available in \cite{brandts2011angle}. We point out that the inscribed ball condition is known to be sufficient but not necessary for convergence of the finite element method \cite{babuska1976anglecond}, and that other, weaker sufficient conditions such as the `maximum angle condition' \cite{synge1957maxang} are known. 

Similar quantities appear also in the literature on finite volume methods.  For example, in \cite{du2003finiteVolumesVoronoi}, a finite volume method based on Voronoi tessellations of the sphere is constructed, and a `regularity norm' is defined for the Voronoi tessellation by minimising the ratio $\abs{x_i-x_j}/\diam{V_i}$ over all neighbours of each Voronoi cell $V_i$ generated by a point $x_i$ on the sphere, and over all Voronoi cells $\{V_i\}_{i\in I}$ in the tessellation. Note that in both $\abs{\cdot}$ and $\diam{\cdot}$ above are computed with respect to the geodesic metric on the sphere. For any given Voronoi cell $V_i$ with generator $x_i$, the smallest value of $\abs{x_i-x_j}/\diam{V_i}$ over all generators $x_j$ of adjacent Voronoi cells $V_j$ resembles the degeneracy ratio of $V_i$ defined in \eqref{eq:degeneracy_parameter}. This is because if $P=V_i$ is a Voronoi cell with generator $x_i$, and if $\min_j\abs{x_i-x_j}$ is small, then $\inrad{V_i}$ will be small, and vice versa.

To the best of our knowledge, the numerical analysis literature that examines geometric constraints on discretisation sets in terms of degeneracy ratio-like quantities does not consider these quantities from the point of view of singular values of the matrix of outer unit normals. Furthermore, in the literature on convex optimisation, we were likewise unable to find any results that relate the singular values of the matrix of outer unit normals to the `roundness' of the polytope. On the basis of this evidence, it appears that Proposition \ref{prop:relates_sigma_and_degeneracy_ratio_vor_cell} is new.

\section{Proof of the result}

We first fix our notation. Let $A\in\mathbb{R}^{m\times d}$ and $b\in\mathbb{R}^{m}$ denote the matrix with outer unit normals and the vector of offsets of the polytope $P$ respectively. Given any set $A$, $\partial A$ and $\inter{A}$ denote the boundary and interior of $A$ respectively. Given any two distinct points $x$, $y$ in the same Euclidean space, $[x\ y]=\{\lambda x+(1-\lambda)y\ :\ 0\leq \lambda \leq1\}$ denotes the line segment joining $x$ and $y$.

Before proving Proposition \ref{prop:relates_sigma_and_degeneracy_ratio_vor_cell}, we establish the following facts.

\begin{lem}\label{lem:all_bi_positive}
    Let $P\subset\mathbb{R}^d$ be a $d$-dimensional polytope. Then the origin lies in $\inter{P}$ if and only if $0\prec b$.
\end{lem}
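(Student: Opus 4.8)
The plan is to prove each implication directly, using only two elementary facts: every row $a_i$ of $A$ is a Euclidean unit vector (so $\abs{a_i}_2=1$, in particular $a_i\neq 0$), and $\inter{P}$ denotes the interior of $P$ in $\R^d$, which is nonempty precisely because $P$ is assumed to be a $d$-dimensional polytope. No deeper structural facts about $P$ (boundedness, irredundancy of the facet description) are needed.

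For the ``only if'' direction I would assume $0\in\inter{P}$, so that $B(0,\rho)\subset P$ for some $\rho>0$. Fixing an index $i$ and using $\abs{a_i}_2=1$, the point $\tfrac{\rho}{2}a_i$ lies in $B(0,\rho)$ and hence in $P$; applying the $i$-th defining inequality to this point gives $a_i^\top\big(\tfrac{\rho}{2}a_i\big)=\tfrac{\rho}{2}\abs{a_i}_2^2=\tfrac{\rho}{2}\le b_i$. Since $i$ is arbitrary, $b_i\ge \rho/2>0$ for all $i$, i.e. $0\prec b$.

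For the ``if'' direction I would assume $0\prec b$ and set $\beta:=\min_{1\le i\le m}b_i$, which is strictly positive since it is the minimum of finitely many positive numbers. For any $x$ with $\abs{x}_2<\beta$, the Cauchy--Schwarz inequality together with $\abs{a_i}_2=1$ gives $a_i^\top x\le \abs{a_i}_2\abs{x}_2=\abs{x}_2<\beta\le b_i$ for every $i$, so $x\in P$. Hence $B(0,\beta)\subset P$, which shows $0\in\inter{P}$.

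I do not expect a genuine obstacle here; both directions are a few lines. The only point deserving a moment of care is that ``interior'' is meant in the full-dimensional ($\R^d$) sense rather than as a relative interior, and this is exactly what the hypothesis that $P$ is a $d$-dimensional polytope licenses; the argument also tacitly uses that each $a_i$ is nonzero, which again follows from $\abs{a_i}_2=1$.
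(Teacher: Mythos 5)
Your proof is correct. The ``if'' direction is essentially identical to the paper's: both set $\beta:=\min_i b_i>0$ and use Cauchy--Schwarz with $\abs{a_i}_2=1$ to show that a ball of radius comparable to $\beta$ about the origin sits inside $P$. The ``only if'' direction, however, takes a genuinely different (and arguably cleaner) route. The paper first notes $0\preccurlyeq b$ from $0\in P$ and then argues by contradiction: if some $b_i=0$, the origin lies on the hyperplane $\{x:a_i^\top x=b_i\}$ and hence in $\partial P$, contradicting $0\in\inter{P}$ --- a step that tacitly relies on the topological fact that a point of $P$ lying on one of the defining hyperplanes must be a boundary point. You instead exhibit the explicit witness $\tfrac{\rho}{2}a_i\in B(0,\rho)\subset P$ and read off $b_i\geq a_i^\top\bigl(\tfrac{\rho}{2}a_i\bigr)=\tfrac{\rho}{2}>0$ directly. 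This avoids any appeal to boundary/supporting-hyperplane reasoning, is purely algebraic, and even yields the quantitative lower bound $b_i\geq\rho/2$ in terms of the radius of any inscribed ball centred at the origin; the paper's argument, by contrast, is purely qualitative. Both are valid; yours is the more elementary of the two.
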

\begin{proof}
   Suppose that $0\in \inter{P}$. Then the origin $0\in\mathbb{R}^d$ satisfies the system of linear inequalities, i.e. $0\preccurlyeq b$. To show that $0\prec b$, we need to show that every index $i\in\{1,\ldots,m\}$ satisfies $a_i^\top 0=0<b_i$. Suppose that there exists $i \in \{1,\dots m\}$ such that $b_i = 0$. Then $a_i^{\top}0 = b_i$, which implies that the origin lies on at least one of the supporting hyperplanes of the polytope $P$. Hence $0 \in \partial P$, which contradicts the assumption that $0 \in \inter{P}$. Thus, every $i\in\{1,\ldots,m\}$ satisfies $a_i^\top 0<b_i$. If $0\prec b$, then $b_i>0$ for $i=1,\ldots,m$, so set $\tilde{b}:=\min_i b_i>0$. The ball $B(0,\tfrac{1}{2}\tilde{b})$ is a proper subset of $P$, since for arbitrary $u\in B(0,\tfrac{1}{2}\tilde{b})$, it holds that $a_i^\top u\leq \abs{a_i}_2\abs{u}_2\leq \tfrac{1}{2}\tilde{b}<b_i$ for every $i=1,\ldots,m$. This is equivalent to the system $Au\prec b$ of linear inequalities, which implies that $u\in \inter{P}$.
\end{proof}
\begin{lem}
 \label{lem:equivalent_lin_prog}
 The value of the optimisation problem \eqref{eq:lin_prog_1} is the same as the value of the optimisation problem 
 \begin{equation}
\label{eq:lin_prog_2}
    \begin{array}{lc}
    \text{maximise} & r \\
    \text {subject to} & re\preccurlyeq b
    \end{array}.
\end{equation}
\end{lem}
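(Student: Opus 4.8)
The plan is to show that \eqref{eq:lin_prog_1} and \eqref{eq:lin_prog_2} admit exactly the same feasible radii $r$; since both problems have the same objective function $r$, equality of their optimal values is then immediate. Writing $e=(1,\dots,1)^\top\in\mathbb{R}^m$ for the all-ones vector, the constraint $re\preccurlyeq b$ of \eqref{eq:lin_prog_2} says precisely that $r\le b_i$ for every $i\in\{1,\dots,m\}$. Thus the lemma reduces to the geometric claim that $r\ge 0$ is feasible for \eqref{eq:lin_prog_1} --- i.e. every $u$ with $\abs{u}_2\le r$ satisfies $Au\preccurlyeq b$, equivalently the closed ball $\overline{B(0,r)}$ is contained in $P$ --- if and only if $r\le b_i$ for all $i$.

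First I would fix an index $i$ and a radius $r\ge 0$ and note that the half-space $H_i:=\{x\in\mathbb{R}^d\ :\ a_i^\top x\le b_i\}$ contains $\overline{B(0,r)}$ if and only if $\sup\{a_i^\top u\ :\ \abs{u}_2\le r\}\le b_i$. The supremum on the left equals $r\abs{a_i}_2$, and since the rows of $A$ are \emph{unit} outer normals we have $\abs{a_i}_2=1$, so that supremum is just $r$. Hence $\overline{B(0,r)}\subseteq H_i$ if and only if $r\le b_i$. Because $P=\bigcap_{i=1}^m H_i$, intersecting over $i=1,\dots,m$ gives $\overline{B(0,r)}\subseteq P$ if and only if $r\le b_i$ for every $i$, i.e. if and only if $re\preccurlyeq b$. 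This is exactly the equivalence claimed above, so the feasible radii of \eqref{eq:lin_prog_1} and \eqref{eq:lin_prog_2} coincide and the optimal values are equal --- both, incidentally, equal to $\min_i b_i$, which is strictly positive by Lemma~\ref{lem:all_bi_positive} under the standing assumption that the origin is the Chebyshev centre of $P$.

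I do not anticipate a genuine obstacle: the crux is the elementary identity $\sup\{a_i^\top u:\abs{u}_2\le r\}=r\abs{a_i}_2$, and the whole argument hinges on the normalisation $\abs{a_i}_2=1$ --- this is the only place where the scaling of the rows of $A$ matters. The one point that needs a little care is the reading of \eqref{eq:lin_prog_1}: the vector $u$ there must be understood as ranging over the entire ball of radius $r$ (this is what makes the problem well posed and equal to $\inrad{P}$), and one may interchange open and closed balls freely, since the inner radius is a supremum and hence the boundary case $r=\min_i b_i$ causes no difficulty.
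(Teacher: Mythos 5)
Your proof is correct and takes essentially the same route as the paper's: both rest on the identity $\sup\{a_i^\top u\ :\ \abs{u}_2\leq r\}=r\abs{a_i}_2=r$, obtained from the Cauchy--Schwarz inequality together with the unit normalisation of the rows of $A$. You are somewhat more explicit about both directions of the equivalence of the feasible sets (the paper's one-line proof spells out only one direction), but the underlying idea is identical.
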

\begin{proof}
 By the Cauchy-Schwarz inequality and the fact that $\abs{a_i}_2=1$ for all $i=1,\ldots,m$, we have that $\sup\{a_i^{\top}u: \abs{u}_2\leq r \} = r$. Thus, $Au\preccurlyeq re$, where $e\in\mathbb{R}^{m}$ is the constant vector with components all equal to 1, and it follows that $re\preccurlyeq b$.
\end{proof}

\begin{proof}[Proof of Proposition \ref{prop:relates_sigma_and_degeneracy_ratio_vor_cell}]
Since both $\sigma_{\min}(A)$ and $\delta(P)$ are invariant under translations of $P$, we may assume without loss of generality that $0\in\inter{P}$.

	Let $A = U\Sigma V^\top$ denote the singular value decomposition of $A$, and let $\{u_1,\dots,u_m\}$ and $\{v_1,\dots,v_d\}$ denote the column vectors of orthogonal matrices $U\in\mathbb{R}^{m\times m}$ and $V\in\mathbb{R}^{d\times d}$ respectively. The matrix $\Sigma\in\mathbb{R}^{m\times d}$ contains the singular values $\sigma_1\geq \sigma_2\geq \cdots\geq \sigma_d\equiv\sigma_{\min}(A)$ of $A$ on its diagonal and zeros elsewhere. The matrix $A$ can thus be written as 
	\begin{equation*}
	A = \sum_{i=1}^d \sigma_i u_i v_i^\top,
	\end{equation*}
    where we emphasise that $u_d$ and $v_d$ are the left and right singular vectors of $A$ corresponding to the smallest singular value $\sigma_d=\sigma_{\min}(A)$. 
    
    Observe that the intersection of $\operatorname{span}(v_d)$ and the polytope $P$ is a line segment $[f\ g]$, where $f = \lambda_1 v_d\in\partial P$ and $g = \lambda_2 v_d\in\partial P$ for $\lambda_1, \lambda_2 \in  \bb{R}$. Since $0\in \inter{P}$ and $0 \in \operatorname{span}(v_d)$, it follows that $0\in [f\ g]$. Thus $\lambda_1,\lambda_2\neq 0$, and we must have
\begin{equation}\label{eq:signs_of_lambda_1_and_lambda_2_differ}
    \operatorname{sign}(\lambda_1) = -\operatorname{sign}(\lambda_2).
\end{equation}
Observe that the length of the line segment $[f\ g]$ is 
\begin{equation}\label{eq:fg_distance}
    \abs{f-g}_2 = \abs{\lambda_1 v_d - \lambda_2 v_d}_2 = \abs{\lambda_1 - \lambda_2}\abs{v_d}_2 = \abs{\lambda_1 - \lambda_2}.
\end{equation}
Since $f,g \in \partial P$, there exist $i,j \in \{1,\dots, m \}$ such that $(Af)_i =a_i^\top f= b_i$ and $(Ag)_j =a_j^\top g= b_j$. If $i=j$, then $f$ and $g$ lie on the hyperplane $\{x\in\mathbb{R}^d\ :\ a_i^\top x=b_i\}$, which implies that the line segment $[f\ g]$ is contained in this hyperplane; this in turn implies that the origin is contained in this hyperplane, i.e. $0\in\partial P$. Since we assume that $0\in\inter{P}$, it follows by taking the contrapositive that $i$ and $j$ are distinct.

The linear mapping $A$ maps $f$, $g$, and $[f\ g]$ to $Af=\lambda_1\sigma_{\min}(A) u_d$, $Ag=\lambda_2\sigma_{\min}(A)u_d$ and $[Af\ Ag]$ respectively, where the length of the line segment $[Af\ Ag]$ is
\begin{equation}
\label{eq:01}
 \abs{Af-Ag}_2=\sigma_{\min}(A)\abs{f-g}_2=\sigma_{\min}(A)\abs{\lambda_1-\lambda_2}
\end{equation}
by \eqref{eq:fg_distance}. We observed earlier that $(Af)_i=b_i$ and $(Ag)_j=b_j$ for distinct $i,j\in\{1,\ldots,m\}$. In addition, we note that
\begin{equation*}
    (Ag)_i = (\sigma_d \lambda_2 u_d)_i = \frac{\lambda_2}{\lambda_1}(\sigma_d \lambda_1 u_d)_i  = \frac{\lambda_2}{\lambda_1}(Af)_i  = \frac{\lambda_2}{\lambda_1}b_i.
\end{equation*}
Let $b' \in \bb{R}^m$ be such that $b'_k := (Ag)_k$ for $k\in \{1,\dots m\}\setminus \{i\}$ and $b'_i = (Af)_i=b_i$. Consider the triangle in $\mathbb{R}^{m}$ with vertices $Af$, $Ag$ and $b'$. This triangle is right-angled with right angle at the vertex $b'$, because the line segment between $Af$ and $b'$ is contained in the hyperplane $\{x\in \bb{R}^m\ :\ x_i = b_i\}$, the line segment between $Ag$ and $b'$ is contained in the hyperplane $\{x\in \bb{R}^m\ :\ x_j = b_j\}$, and these two hyperplanes are orthogonal to each other because $i$ and $j$ are distinct. Since $Ag$ and $b'$ differ only in the $i$-th coordinate, the length of the line segment $[Ag\ b']$ is 
\begin{equation*}
\abs{Ag-b'}_2= \abs{ b_i\frac{\lambda_2}{\lambda_1}-b_i} = b_i\left(1+\abs{\frac{\lambda_2}{\lambda_1}}\right)>b_i,
\end{equation*}
where we used Lemma \ref{lem:all_bi_positive}, \eqref{eq:signs_of_lambda_1_and_lambda_2_differ}, and the fact that $\lambda_1,\lambda_2$ are nonzero and of opposite sign. The hypotenuse of the right-angled triangle is given by the line segment $[Af\ Ag]$. Comparing the length of the hypotenuse $[Af\ Ag]$ to that of $[Ag\ b']$ and using the preceding inequality, we have 
\begin{equation}\label{eq:sigma_d_b_relation}
	\sigma_d\abs{\lambda_1 - \lambda_2}=\abs{Af-Ag}_2>\abs{Ag-b'}_2= b_i\left(1 + \abs{\frac{\lambda_2}{\lambda_1}}\right)>b_i,
\end{equation}
where we used \eqref{eq:01}. Now it follows that 
\begin{equation}
 \label{eq:02}
 \sigma_{\min}(A)\diam{P}\geq \sigma_{\min}(A)\abs{f-g}_2=\abs{Af-Ag}_2>b_i,
\end{equation}
where we used the definition of the diameter, \eqref{eq:01}, and \eqref{eq:sigma_d_b_relation}.
Recall from Lemma \ref{lem:equivalent_lin_prog} that $\inrad{P}$ is the value of the optimisation problem \eqref{eq:lin_prog_2}. The constraint in \eqref{eq:lin_prog_2} implies that $\inrad{P} \leq b_i$ for every $i\in\{1,\ldots,m\}$. By \eqref{eq:02},
\begin{equation*}
 \sigma_{\min}(A)\diam{P}>b_i\geq \inrad{P},
\end{equation*}
and applying the definition \eqref{eq:degeneracy_parameter} of $\delta(P)$ completes the proof.
\end{proof}

\bibliographystyle{plain} 
\bibliography{degeneracy_ratio}

\end{document}